\numberwithin{equation}{section}
\newtheorem{theorem}{Theorem}
\newtheorem{definition}[theorem]{Definition}
\newtheorem{lemma}{Lemma}
\newtheorem{remark}{Remark}
\newtheorem{proposition}{Proposition}
\begin{document}
 \title{Quasi-potentials and regularization of currents, and applications}
 \author{Tuyen Trung Truong}
    \address{Indiana University Bloomington IN 47405}
 \email{truongt@indiana.edu}
\thanks{}
    \date{\today}
    \keywords{Intersection of currents, Meromorphic maps, Pull-back of currents, Quasi-potentials.}
    \subjclass[2000]{37F99, 32H50.}
    \begin{abstract}
Let $Y$ be a compact K\"ahler manifold. We show that the weak regularization $K_n$ of Dinh and Sibony for the diagonal $\Delta _Y$ (see Section 2 for more detail) is compatible with wedge product in the following sense: 

If $T$ is a positive $dd^c$-closed $(p,p)$ current and $\theta$ is a smooth $(q,q)$ form then there is a sequence of positive $dd^c$-closed $(p+q,p+q)$ currents $S_n$ whose masses converge to $0$ so that $-S_n\leq K_n(T\wedge \theta )-K_n(T)\wedge \theta \leq S_n$ for all $n$.

We also prove a result concerning the quasi-potentials of positive closed currents. We give two applications of these results. First, we prove a corresponding compatibility with wedge product for the pullback operator defined in our previous paper. Second, we define an intersection product for positive $dd^c$-closed currents. This intersection is symmetric and has a local nature.   
\end{abstract}
\maketitle
\section{Introduction}       
\label{SectionIntroduction} 

This paper continues the work of our previous paper \cite{truong} on pullback of currents. Here we prove a compatible property on pullingback of a current of the form $T\wedge \theta$, where $T$ is a pseudo-$dd^c$-plurisubharmonic $(p,p)$ current and $\theta$ is a smooth $(q,q)$ form. We will also define an intersection product for positive $dd^c$-closed currents. This intersection is symmetric and has a local nature.

We will need the following two technical results. The first concerns the quasi-potential of a positive closed $(p,p)$ current $T$ on a compact K\"ahler manifold $Y$. It is known that (see Dinh and Sibony\cite{dinh-sibony5}, Bost, Gillet and Soule\cite{bost-gillet-soule}) there is a $DSH$ (p-1,p-1) current $S$ and a closed smooth form $\alpha$ so that $T=\alpha +dd^cS$. (The definition of $DSH$ currents, which was given by Dinh and Sibony \cite{dinh-sibony1}, will be recalled in Section 3). Here $S$ is a difference of two negative currents. When $p=1$ or $Y$ is a projective space, then we can choose $S$ to be negative. However in general we can not choose $S$ to be negative (see \cite{bost-gillet-soule}). The following weaker conclusion is sufficient for the purpose of this paper

\begin{lemma}
Let $T$ be a positive closed $(p,p)$ current on a compact K\"ahler manifold $Y$. Then there is a closed smooth $(p,p)$ form $\alpha$ and a negative $DSH$ $(p-1,p-1)$ current $S$ so that 
\begin{eqnarray*}
T\leq \alpha +dd^cS.
\end{eqnarray*}
Moreover, there is a constant $C>0$ independent of $T$ so that $||\alpha ||_{L^{\infty}}\leq C||T||$ and $||S||\leq C||T||$. If $T$ is strongly positive then we can choose $S$ to be strongly negative.
\label{LemmaQuasiPotential}\end{lemma}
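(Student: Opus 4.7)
The plan is to reduce to the diagonal current $[\Delta_Y]$ on $Y\times Y$ via the kernel formula $T = (\pi_1)_*\bigl([\Delta_Y] \wedge \pi_2^* T\bigr)$, where $\pi_1,\pi_2 : Y\times Y \to Y$ are the projections. The first, and main, step will be to establish a single $T$-independent inequality $[\Delta_Y] \leq \Omega + dd^c \mathcal{S}$ on $Y\times Y$, with $\Omega$ smooth, $\mathcal{S}$ a negative $DSH$ current, and $\|\Omega\|_{L^\infty}$ and $\|\mathcal{S}\|$ controlled by a constant depending only on $Y$. The second step is essentially formal: wedge this inequality with the positive closed current $\pi_2^* T$ and push forward by $\pi_1$.

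For the main step, I would start from the known $DSH$-decomposition $[\Delta_Y] = \Omega_0 + dd^c(\mathcal{S}_1 - \mathcal{S}_2)$ of Bost-Gillet-Soul\'e / Dinh-Sibony, with $\mathcal{S}_1, \mathcal{S}_2$ negative of controlled mass and $dd^c \mathcal{S}_i = R_i^+ - R_i^-$ for positive closed $R_i^\pm$ of controlled mass. Rearranging,
$$[\Delta_Y] = \Omega_0 + R_2^- - R_2^+ + dd^c \mathcal{S}_1 \;\leq\; \Omega_0 + R_2^- + dd^c \mathcal{S}_1,$$
which reduces the problem to replacing the single positive closed current $R_2^-$ by a smooth form modulo $dd^c$ of a negative $DSH$ current. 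I would do this by iterating the decomposition on the successive residuals $R^{(k)}$, writing each as (smooth) + (smaller residual) + $dd^c$(negative $DSH$), and summing the resulting series. To make the iteration converge I would rely on the Dinh-Sibony refined construction (using the smooth kernels $K_n \to [\Delta_Y]$), which should yield a geometric contraction $\|R^{(k+1)}\| \leq c \|R^{(k)}\|$ with $c < 1$, rather than the non-convergent bound $\|R^{(k+1)}\| \leq C \|R^{(k)}\|$ that the naive decomposition gives.

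Once the diagonal inequality is in hand, the transfer to $T$ is formal. The product $\mathcal{S} \wedge \pi_2^* T$ is well defined because $\pi_2^* T$ is a pull-back by a submersion and slices cleanly against the $DSH$ current $\mathcal{S}$; $d$-closedness of $\pi_2^* T$ gives $dd^c(\mathcal{S} \wedge \pi_2^* T) = dd^c \mathcal{S} \wedge \pi_2^* T$, and positivity of $\pi_2^* T$ preserves the inequality. The push-forward $(\pi_1)_*$, a positivity- and $dd^c$-preserving operation, then yields $T \leq \alpha + dd^c S$ with $\alpha := (\pi_1)_*(\Omega \wedge \pi_2^* T)$ smooth and $S := (\pi_1)_*(\mathcal{S} \wedge \pi_2^* T)$ negative $DSH$; the bounds $\|\alpha\|_{L^\infty} \leq \|\Omega\|_{L^\infty}\|T\|$ and $\|S\| \leq \|\mathcal{S}\|\|T\|$ follow from fibrewise integration. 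For the strongly positive assertion, the Dinh-Sibony kernels can be chosen strongly positive so that $\mathcal{S}$ is strongly negative, and strong positivity of $\pi_2^* T$ is preserved by the wedge and the push-forward. The delicate point throughout is the diagonal step: as \cite{bost-gillet-soule} shows, equality with a negative quasi-potential is in general impossible, so the relaxation to $\leq$ is essential to absorb the positive part of the $DSH$-quasi-potential into the iterated smooth correction.
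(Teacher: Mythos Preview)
Your overall architecture—working with a kernel on $Y\times Y$ and then wedging with $\pi_2^*T$ and pushing forward—is in the same spirit as the paper's, but the iteration step contains a genuine gap. You hope the Dinh--Sibony construction yields a geometric contraction $\|R^{(k+1)}\|\le c\,\|R^{(k)}\|$ with $c<1$, so that an infinite series converges. No such contraction holds: the mass of a positive closed current depends only on its cohomology class, and the kernel operators at hand (whether the smooth approximations $K_n$ or the operator $R\mapsto(\pi_1)_*(\Phi^+\wedge\pi_2^*R)$) do not shrink cohomology. There is simply no mechanism by which the residual becomes smaller in mass, and you yourself flag that the naive bound is $\|R^{(k+1)}\|\le C\|R^{(k)}\|$; the $K_n$ do not improve this.

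What the paper exploits instead is a \emph{regularity} gain rather than a mass contraction. Via the blowup of the diagonal one gets a negative kernel $H=\pi_*(\varphi\eta)$ with $dd^cH=[\Delta_Y]-\Phi^++\Phi^-$, where $\Phi^\pm$ are positive closed forms with $L^1$ coefficients whose singularities are of order $|y_1-y_2|^{-(2k-2)}$. Applying this to $T$ yields $T\le R_1^++dd^cS_1$ with $R_1^+$ positive closed and with $L^1$ coefficients. Applying the same operator to $R_1^+$ produces $R_2^+$ with coefficients in $L^{1+1/(2k+2)}$, because integration against the weakly singular kernel $\Phi^+$ is a smoothing operator (Lemma~2.1 in \cite{dinh-sibony1}). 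After \emph{finitely many} iterations the residual is continuous, hence dominated by a multiple of $\omega_Y^p$, and one stops. No infinite sum is needed; the masses may well grow at each step, but the fixed finite number of steps absorbs this into the constant $C$. So the missing idea in your proposal is precisely this bootstrap in integrability, which replaces the unattainable mass contraction.
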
   
Here $||.||_{L^{\infty}}$ is the maximum norm of a continuous form and $||.||$ is the mass of a positive or negative current.

The second technical result concerns regularization of currents. If $Y$ is a compact K\"ahler manifold, we let $\pi _1,\pi _2:Y\times Y\rightarrow Y$ the projections. If $K$ is a current on $Y\times Y$ and $T$ a current on $Y$, we define $K(T)=(\pi _1)_*(K\wedge \pi _2^*(T))$, whenever the wedge product $K\wedge \pi _2^*(T)$ makes sense. 
\begin{lemma} Let $Y$ be a compact K\"ahler manifold. Let $K_n$ be a weak regularization of the diagonal $\Delta _Y$ defined in \cite{dinh-sibony1} (see Section 2 for more detail). Let $T$ be a $DSH$ $(p,p)$ current and let $\theta$ be a continuous $(q,q)$ form on $Y$. Assume that there is a positive $dd^c$-closed current $R$ so that $-R\leq T\leq R$. Then there are positive $dd^c$-closed $(p+q,p+q)$ currents $R_n$ so that $\lim _{n\rightarrow\infty}||R_n||=0$ and 
\begin{eqnarray*}
-R_n\leq K_n(T\wedge \theta )-K_n(T)\wedge \theta\leq R_n, 
\end{eqnarray*}
for all $n$. 

If $R$ is strongly positive or closed then we can choose $R_n$ to be so. 
\label{TheoremRegularizationCompactibleWithWedgproduct}\end{lemma}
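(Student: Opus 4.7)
The strategy rests on the projection-formula identity
\[
K_n(T \wedge \theta) - K_n(T) \wedge \theta \;=\; (\pi_1)_*\!\bigl(K_n \wedge \pi_2^*T \wedge \eta_\theta\bigr), \qquad \eta_\theta := \pi_2^*\theta - \pi_1^*\theta,
\]
which follows from $\theta\wedge(\pi_1)_*(\cdot) = (\pi_1)_*(\pi_1^*\theta\wedge\cdot)$. The continuous form $\eta_\theta$ vanishes identically on the diagonal $\Delta_Y$, and since $K_n\to[\Delta_Y]$ weakly the right-hand side tends to $0$; the task is to produce explicit positive $dd^c$-closed majorants $R_n$ with $\|R_n\|\to 0$.

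Fix a smooth strongly positive closed $(q,q)$ form $\Omega$ on $Y\times Y$, for instance $\Omega = c\,(\pi_1^*\omega+\pi_2^*\omega)^q$ for a Kähler form $\omega$ on $Y$ and a constant $c$ large enough that $\Omega$ dominates any fixed smooth $(q,q)$ form. In Dinh--Sibony's construction, $K_n$ is supported in a shrinking neighborhood $V_n$ of $\Delta_Y$; since $\eta_\theta$ is continuous on $Y\times Y$ and vanishes on $\Delta_Y$, uniform continuity gives $\epsilon_n:=\sup_{V_n}|\eta_\theta/\Omega|\to 0$, whence $-\epsilon_n\,\Omega\leq\eta_\theta\leq\epsilon_n\,\Omega$ pointwise on $V_n$. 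Combined with $-\pi_2^*R\leq\pi_2^*T\leq\pi_2^*R$ and the elementary observation that $(\pi_2^*R\pm\pi_2^*T)\wedge(\epsilon_n\Omega\pm\eta_\theta)\geq 0$, this yields $\pm\pi_2^*T\wedge\eta_\theta\leq\epsilon_n\,\pi_2^*R\wedge\Omega$ on $V_n$. Writing $K_n=K_n^+-K_n^-$ as a difference of smooth strongly positive closed forms of uniformly bounded mass and using that $K_n^\pm$ are supported in $V_n$, one obtains the global current inequality
\[
\pm\,K_n\wedge\pi_2^*T\wedge\eta_\theta \;\leq\; \epsilon_n\,(K_n^++K_n^-)\wedge\pi_2^*R\wedge\Omega.
\]

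Define $R_n := \epsilon_n\,(\pi_1)_*\!\bigl((K_n^++K_n^-)\wedge\pi_2^*R\wedge\Omega\bigr)$. Then $R_n\geq 0$ by positivity of each factor; $\|R_n\|\leq\epsilon_n\,(\|K_n^+\|+\|K_n^-\|)\,\|R\|\,\|\Omega\|\to 0$ since $\|K_n^\pm\|$ is uniformly bounded; and $dd^c R_n=0$ because $K_n^\pm$ and $\Omega$ are smooth and $d$-closed while $\pi_2^*R$ is $dd^c$-closed, so Leibniz leaves no surviving cross terms. The strongly positive and closed addenda follow directly: $\Omega$ and $K_n^\pm$ are strongly positive closed, so $R_n$ is strongly positive, and if $R$ is closed then $\pi_2^*R$ is closed, making $R_n$ closed as well. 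The main technical obstacle is the passage from the pointwise inequality $\pm\eta_\theta\leq\epsilon_n\Omega$ on $V_n$ to the displayed global current inequality: this rests on the strong positivity of $K_n^\pm\wedge\pi_2^*R$ and on $K_n^\pm$ being supported in $V_n$, so that the wedge with $\epsilon_n\Omega\pm\eta_\theta$ (nonnegative on $V_n$, where $K_n^\pm$ lives) yields a positive current globally. If Dinh--Sibony's construction does not literally provide compact support for $K_n$ in $V_n$, one replaces this step by the analogous tail estimate, using the quasi-potential structure of $K_n = \alpha_\Delta + dd^c\Theta_n$ and Lemma~\ref{LemmaQuasiPotential} applied to $\pi_2^*R$ to turn the $dd^c\Theta_n$ piece into integrals of $\Theta_n$ against smooth bounded forms, which vanish in the limit.
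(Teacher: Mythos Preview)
Your argument is essentially the paper's own proof, recast in product-space language: the paper writes $H_n(y)=\int_z K_n(y,z)\wedge(\theta(z)-\theta(y))\wedge T(z)$, which is exactly your $(\pi_1)_*(K_n\wedge\pi_2^*T\wedge\eta_\theta)$, then uses the shrinking support of $K_n$ near $\Delta_Y$ to bound $\theta(z)-\theta(y)$ by $h(\delta)C(\omega_Y(y)+\omega_Y(z))^q$, and defines $R_n$ precisely as your $\epsilon_n(\pi_1)_*((K_n^++K_n^-)\wedge\pi_2^*R\wedge\Omega)$.

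One correction: in the Dinh--Sibony construction it is $K_n=K_n^+-K_n^-$ that is supported in the shrinking neighborhood $V_n$, not the individual $K_n^\pm$ (indeed $K_n^-=\pi_*(\Theta^-\wedge\eta)$ is a fixed global form independent of $n$). Your hedge at the end is therefore needed, but the repair is simpler than the quasi-potential route you suggest and is exactly what the paper does: since the left-hand side $K_n\wedge\pi_2^*T\wedge\eta_\theta$ is supported in $V_n$, one applies the bound $\pm\eta_\theta\le\epsilon_n\Omega$ there to get a majorant involving $(K_n^++K_n^-)$ restricted to $V_n$, and then extends the domain of integration back to all of $Y\times Y$ by positivity of $(K_n^++K_n^-)\wedge\pi_2^*R\wedge\Omega$. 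This gives your global $R_n$ without needing $K_n^\pm$ themselves to have small support.
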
  

Now we present some consequences of Lemmas \ref{LemmaQuasiPotential} and \ref{TheoremRegularizationCompactibleWithWedgproduct}. We discuss first the application to pullback of currents. Let $X$ and $Y$ be compact K\"ahler manifolds and let $f:X\rightarrow Y$ be a dominant meromorphic map. In \cite{truong} we defined a pullback operator $f^{\sharp}$ for currents on $Y$ as follows. Let $s\geq 0$ be an integer. Then a good approximation scheme by $C^s$ forms is an approximation for all $DSH$ currents by $C^s$ forms and  satisfies a list of requirements (See Definition \ref{DefinitionGoodApproximation} in Section 3. Note that the definition of good approximation schemes here is stronger than that in \cite{truong} because here we require it to satisfy in addition the conclusions of Lemma \ref{TheoremRegularizationCompactibleWithWedgproduct}). Because $Y$ is compact, if $T$ is a current on $Y$ then it is of a finite order $s_0$. We say that $f^{\sharp}(T)=S$ is well-defined if there is a number $s\geq s_0$ such that for any good approximation by $C^{s+2}$ forms $\mathcal{K}_n$ then for any smooth form $\alpha$ on $X$ we have 
\begin{eqnarray*}
\lim _{n\rightarrow\infty}\int _{Y}T\wedge \mathcal{K}_n(f_*(\alpha ))=\int _XS\wedge \alpha . 
\end{eqnarray*}

Let $\Gamma _f$ be the graph of $f$. Let $\pi _X,\pi _Y:X\times Y\rightarrow X,Y$ be the projections. A current $\tau$ is called pseudo-$dd^c$-plurisubharmonic if there is a smooth form $\gamma $ so that $dd^c\tau\geq -\gamma $. We have the following result
\begin{theorem}

Let $T$ be a $DSH$ $(p,p)$ current and let $\theta$ be a smooth $(q,q)$ form on $Y$. Assume that there is a positive pseudo-$dd^c$-plurisubharmonic current $\tau$ so that $-\tau\leq T\leq \tau$.

a) If $f$ is holomorphic and $f^{\sharp}(T)$ is well-defined, then $f^{\sharp}(T\wedge \theta )$ is well-defined. Moreover, $f^{\sharp}(T\wedge \theta )=f^{\sharp}(T)\wedge f^*(\theta )$. 

b) More general, assume that there is a number $s\geq 0$ and a $(p,p)$ current $(\pi _Y|\Gamma _f)^{\sharp}(T)$ on $X\times Y$  such that for any good approximation by $C^{s+2}$ forms $\mathcal{K}_n$ then 
\begin{eqnarray*} 
\lim _{n\rightarrow\infty}\pi _Y^*(\mathcal{K}_n(T))\wedge [\Gamma _f]=(\pi _Y|\Gamma _f)^{\sharp}(T).
\end{eqnarray*}
Then $f^{\sharp}(T\wedge \theta )$ is well-defined, and moreover $f^{\sharp}(T\wedge \theta )=(\pi _X)_*((\pi _Y|\Gamma _f)^{\sharp}(T)\wedge \pi _Y^*(\theta ))$.
\label{TheoremPullbackCompatibleWithWedgeProduct}\end{theorem}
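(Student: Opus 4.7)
The plan is to deduce part a) from part b), and to prove part b) by combining Lemma \ref{TheoremRegularizationCompactibleWithWedgproduct} with the graph reformulation of $f^{\sharp}$. For part b), fix a smooth test form $\alpha$ on $X$; by the definition of $f^{\sharp}$ it suffices to identify $\lim_{n\to\infty}\int_Y (T\wedge\theta)\wedge \mathcal{K}_n(f_*\alpha)$ with $\int_X (\pi_X)_*((\pi_Y|\Gamma_f)^{\sharp}(T)\wedge \pi_Y^*\theta)\wedge \alpha$. Using the self-adjointness of $\mathcal{K}_n$ (from the symmetry of the kernel $K_n$) together with the standard identity $f^*u=(\pi_X)_*(\pi_Y^*u\wedge [\Gamma_f])$ for smooth $u$, the left-hand expression rewrites as $\int_{X\times Y}\pi_Y^*\mathcal{K}_n(T\wedge\theta)\wedge [\Gamma_f]\wedge \pi_X^*\alpha$.

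Next, apply Lemma \ref{TheoremRegularizationCompactibleWithWedgproduct} to the pair $(T,\theta)$, using Lemma \ref{LemmaQuasiPotential} if necessary to replace the pseudo-$dd^c$-plurisubharmonic bound $\tau$ by a genuine positive $dd^c$-closed dominating current. This produces a decomposition $\mathcal{K}_n(T\wedge\theta)=\mathcal{K}_n(T)\wedge\theta + E_n$ with $-R_n\leq E_n\leq R_n$, where the $R_n$ are positive $dd^c$-closed and $\|R_n\|\to 0$. The main piece $\int_{X\times Y}\pi_Y^*\mathcal{K}_n(T)\wedge \pi_Y^*\theta\wedge [\Gamma_f]\wedge \pi_X^*\alpha$ converges by the hypothesis of part b) to $\int_{X\times Y}(\pi_Y|\Gamma_f)^{\sharp}(T)\wedge \pi_Y^*\theta\wedge \pi_X^*\alpha$ (since $\pi_Y^*\theta\wedge \pi_X^*\alpha$ is smooth), and pushing forward by $\pi_X$ yields the desired right-hand side. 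The error contribution is controlled by $\|\pi_Y^*R_n\wedge [\Gamma_f]\|$, which vanishes in the limit: this wedge of a positive $dd^c$-closed current with the positive closed current $[\Gamma_f]$ admits a cohomological mass bound of the form (constant)$\cdot\|R_n\|$, where the constant depends only on a fixed K\"ahler class on $X\times Y$.

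For part a), when $f$ is holomorphic the current $\pi_Y^*\mathcal{K}_n(T)\wedge [\Gamma_f]$ is just the restriction of a smooth form to the smooth submanifold $\Gamma_f$; its pushforward is $f^*\mathcal{K}_n(T)$, which converges by the assumed well-definedness of $f^{\sharp}(T)$, so the hypothesis of part b) holds automatically. Combining part b) with the identity $[\Gamma_f]\wedge \pi_Y^*\theta=[\Gamma_f]\wedge \pi_X^*f^*\theta$ (valid on the graph because $\pi_Y=f\circ \pi_X$ there) and the projection formula yields $f^{\sharp}(T\wedge\theta)=f^{\sharp}(T)\wedge f^*\theta$. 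The main obstacle I anticipate is the mass estimate $\|\pi_Y^*R_n\wedge [\Gamma_f]\|\to 0$: one must pin down a consistent definition of this wedge (the Dinh--Sibony intersection of a positive $dd^c$-closed with a positive closed current is the natural choice) and then use both closedness conditions to reduce to a cohomological bound independent of $n$. A secondary technical point is the pseudo-$dd^c$-plurisubharmonic-versus-$dd^c$-closed gap between the theorem's hypothesis and Lemma \ref{TheoremRegularizationCompactibleWithWedgproduct}, which is precisely why Lemma \ref{LemmaQuasiPotential} needs to be invoked in the argument above.
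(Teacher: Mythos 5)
There is a genuine gap, and it sits exactly where you flag your ``main obstacle''; neither of the two difficulties you name is a removable technicality. First, Lemma \ref{TheoremRegularizationCompactibleWithWedgproduct} (equivalently condition 9) of Definition \ref{DefinitionGoodApproximation}) requires the current being regularized to be sandwiched by a \emph{positive $dd^c$-closed} current $R$. You propose to apply it to the pair $(T,\theta)$, but the theorem only supplies a pseudo-$dd^c$-plurisubharmonic dominant $\tau$, and Lemma \ref{LemmaQuasiPotential} does not bridge that gap: it takes a positive \emph{closed} current as input and outputs a bound of the form $\alpha+dd^cS$ with $S$ negative, which is not a positive $dd^c$-closed dominating current. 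Second, your error term is controlled by an asserted bound $||\pi_Y^*(R_n)\wedge [\Gamma _f]||\leq C||R_n||$. No such cohomological bound is available: the $R_n$ are only $dd^c$-closed, not closed, so the mass of their intersection with $[\Gamma_f]$ is not read off from cohomology classes; the intersection itself is not defined in any elementary sense (the $R_n$ produced in the proof of Lemma \ref{TheoremRegularizationCompactibleWithWedgproduct} are merely currents with $L^1$ coefficients); and the sandwich $-R_n\leq E_n\leq R_n$ does not pass through a wedge with $[\Gamma_f]$, since wedging with a current of integration is not an order-preserving operation. Making sense of and estimating such products is precisely the kind of problem Definition \ref{DefinitionIntersectionCurrents} and Theorem \ref{TheoremSymmertyOfIntersectionCurrents} are set up to address, so it cannot be waved through here.

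The paper's proof is arranged to avoid both issues. It applies the wedge-compatibility on the \emph{test-form} side, to the pair $(f_*(\beta),\theta)$: there $f_*(\beta)$ is genuinely dominated by the strongly positive \emph{closed} current $f_*(\omega_X^{\dim X-p-q})$, so condition 9) legitimately yields strongly positive closed $R_n$ with $||R_n||\to 0$ and $-R_n\leq \theta\wedge\mathcal{K}_n(f_*(\beta))-\mathcal{K}_n(\theta\wedge f_*(\beta))\leq R_n$. The remaining pairing against $T$ is handled by replacing $T$ with $\pm(\mathcal{K}_m^+(\tau)+\mathcal{K}_m^-(\tau))$ (continuous forms, so all wedges are classical) and showing $\lim_n\lim_m\int_Y(\mathcal{K}_m^+(\tau)+\mathcal{K}_m^-(\tau))\wedge R_n=0$; this is where Lemma \ref{LemmaQuasiPotential} is actually used --- applied to the positive closed $R_n$, writing $R_n\leq\alpha_n+dd^cS_n$ --- followed by integration by parts and the hypothesis $dd^c\tau\geq-\gamma$. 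In short, your graph/projection-formula bookkeeping and the reduction of a) to b) are fine, but the analytic core --- how the smallness of $||R_n||$ is converted into smallness of a pairing, and where the hypotheses on $\tau$ enter --- is missing, and the route you sketch for it does not go through.
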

Roughly speaking, the result b) of Theorem \ref{TheoremPullbackCompatibleWithWedgeProduct} says that under some natural conditions if we can pullback $T$ then we can do it locally. To illustrate the use of Theorem \ref{TheoremPullbackCompatibleWithWedgeProduct} we will show in Proposition \ref{PropositionPullbackOfFormsWithBoundedCoefficients} in Section 3 the following result: If $T$ is a $(p,p)$ (non-smooth) form whose coefficients are bounded by a quasi-PSH function then $T$ can be pulled back by meromorphic maps. Moreover the resulting current is the same as that defined by Dinh and Sibony (see Proposition 4.2 in \cite{dinh-sibony2}) 

Some special cases of Theorem \ref{TheoremPullbackCompatibleWithWedgeProduct} and Proposition \ref{PropositionPullbackOfFormsWithBoundedCoefficients} have been considered in the literature. Diller \cite{diller} defined for a rational selfmap of $\mathbb{P}^2$ the pullback of a current of the form $\psi T$ where $\psi$ is a smooth function and $T$ is a positive closed $(1,1)$ current. Russakovskii and Shiffman  \cite{russakovskii-shiffman} defined pullback by a holomorphic map for currents of the forms $\varphi \Phi$ (where $\varphi$ is a quasi-plurisubharmonic function and $\Phi$ is a smooth form) and $[D]\wedge \Phi$ (where $D$ is a divisor and $\Phi$ is a smooth form). 

We can also apply Theorem \ref{TheoremPullbackCompatibleWithWedgeProduct} to other situations. Let $f:X\rightarrow Y$ be a dominant meromorphic map between compact K\"ahler manifolds. In \cite{truong}, we showed that if $T$ is a positive $dd^c$-closed $(1,1)$ current then $f^{\sharp}(T)$ is well-defined (the resulting current coincides with the definitions given by Alessandrini and Bassanelli \cite{alessandrini-bassanelli2} and Dinh and Sibony \cite{dinh-sibony2}), and therefore $f^{\sharp}(T\wedge \theta)$ is well-defined for any smooth $(q,q)$ form $\theta$. Likewise, if $V$ is an irreducible analytic variety of codimension $p$ so that $\pi _Y^{-1}(V)\cap \Gamma _f$ has codimension $\geq p$ then $(\pi _Y|\Gamma _f)^{\sharp}[V]$ is well-defined, and therefore $f^{\sharp}([V]\wedge \theta )$ is well-defined for smooth $(q,q)$ forms $\theta$.

Using super-potential theory, Dinh and Sibony \cite{dinh-sibony4} defined a satisfying intersection theory for positive closed currents on a projective space. In the below we give a definition for intersection product of currents on a general compact K\"ahler manifold and discuss some of its properties.
\begin{definition}
Let $Y$ be a compact K\"ahler manifold. Let $T_1$ be a $DSH$ current and let $T_2$ be any current on $Y$. Let $s_0$ be the order of $T_2$. We say that $T_1\wedge T_2$ is well-defined if there is $s\geq s_0$ and a current $S$ so that for any good approximation scheme by $C^{s+2}$ forms $\mathcal{K}_n$ then $\lim _{n\rightarrow\infty}\mathcal{K}_n(T_1)\wedge T_2=S$. Then we write $T_1\wedge T_2=S$. 
\label{DefinitionIntersectionCurrents}\end{definition}  

This definition has the following properties
\begin{theorem}
Let $T_1$ and $T_2$ be positive $dd^c$-closed currents. Assume that $T_1\wedge T_2$ is well-defined. Let $\theta$ be a smooth $(q,q)$ form. 

a) $\theta \wedge T_2$ and $T_2\wedge \theta$ are well-defined and are the same as the usual definition. 

b) $T_2\wedge T_1$ is also well-defined. Moreover, $T_1\wedge T_2=T_2\wedge T_1$.

c) $T_1\wedge (\theta \wedge T_2)$ is also well-defined. Moreover $T_1\wedge (\theta \wedge T_2)=(T_1\wedge T_2)\wedge \theta$.
\label{TheoremSymmertyOfIntersectionCurrents}\end{theorem}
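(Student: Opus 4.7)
The plan is to treat parts (a), (b), (c) separately, dispatching (a) and (c) quickly from definitions and devoting the real work to the symmetry statement (b). For (a), I would use that a good approximation scheme converges smooth forms to themselves in $C^{s+2}$-topology, so for smooth $\theta$ both $\mathcal{K}_n(\theta)\wedge T_2\to\theta\wedge T_2$ and $\mathcal{K}_n(T_2)\wedge\theta\to T_2\wedge\theta$ hold weakly (the second by weak convergence $\mathcal{K}_n(T_2)\to T_2$ combined with smoothness of $\theta$), and both limits coincide with the usual smooth-by-current wedge. For (c), once (b) is available, I would commute $\mathcal{K}_n(T_1)\wedge(\theta\wedge T_2)=\theta\wedge(\mathcal{K}_n(T_1)\wedge T_2)$ (valid because $\mathcal{K}_n(T_1)$ is smooth and all relevant total degrees are even) and pass to the limit using the hypothesis $\mathcal{K}_n(T_1)\wedge T_2\to T_1\wedge T_2$ together with continuity of wedging against the smooth form $\theta$, obtaining $T_1\wedge(\theta\wedge T_2)=(T_1\wedge T_2)\wedge\theta$.

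For (b) the crucial input is that the weak regularization $K_n$ of the diagonal is invariant under the involution $\sigma(x,y)=(y,x)$ on $Y\times Y$. For a smooth test form $\phi$, a double application of the projection formula plus the change of variables by $\sigma$ gives the identity
\begin{equation*}
\int_Y\phi\wedge\mathcal{K}_n(T_1)\wedge T_2
=\int_{Y\times Y}K_n\wedge\pi_1^{*}(\phi\wedge T_2)\wedge\pi_2^{*}T_1
=\int_Y T_1\wedge\mathcal{K}_n(\phi\wedge T_2).
\end{equation*}
I would then apply Lemma~\ref{TheoremRegularizationCompactibleWithWedgproduct} with $T=T_2$, $R=T_2$, and auxiliary form $\phi$ to produce a smooth form $\epsilon_n:=\mathcal{K}_n(\phi\wedge T_2)-\mathcal{K}_n(T_2)\wedge\phi$ satisfying $-R_n\leq\epsilon_n\leq R_n$ with $R_n$ positive $dd^c$-closed and $\|R_n\|\to 0$. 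Substituting and using even-degree commutativity yields
\begin{equation*}
\int_Y\phi\wedge\bigl[\mathcal{K}_n(T_1)\wedge T_2-\mathcal{K}_n(T_2)\wedge T_1\bigr]=\int_Y T_1\wedge\epsilon_n.
\end{equation*}
Once this is shown to vanish in the limit, the hypothesis that $T_1\wedge T_2$ is well-defined forces $\mathcal{K}_n(T_2)\wedge T_1\to T_1\wedge T_2$ weakly, and Definition~\ref{DefinitionIntersectionCurrents} then yields $T_2\wedge T_1=T_1\wedge T_2$.

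The hard part is therefore establishing $\int_Y T_1\wedge\epsilon_n\to 0$. My plan is to regularize $T_1$ by the strongly positive smooth forms $\mathcal{K}_m(T_1)$ and use that the current inequality $\pm\epsilon_n\leq R_n$ is preserved after wedging with a positive smooth form, giving
\begin{equation*}
\Bigl|\int_Y\mathcal{K}_m(T_1)\wedge\epsilon_n\Bigr|\leq\int_Y\mathcal{K}_m(T_1)\wedge R_n=\int_Y T_1\wedge\mathcal{K}_m(R_n),
\end{equation*}
the last identity coming from the same symmetric-kernel computation as above. A uniform-in-$m$ bound of order $C\|T_1\|\,\|R_n\|$ can then be extracted from a quasi-potential decomposition of $T_1$ in the spirit of Lemma~\ref{LemmaQuasiPotential}, combined with mass control of the Aeppli class of a positive $dd^c$-closed current on a compact K\"ahler manifold. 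This cohomological mass estimate, which is where the positivity and $dd^c$-closedness of $T_1$ and $R_n$ enter essentially, is the single nontrivial step in the entire argument; once it is in hand, letting $m\to\infty$ followed by $n\to\infty$ closes the proof of (b).
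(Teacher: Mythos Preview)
Your overall strategy for (b)---use condition 9) to trade $\mathcal{K}_n(T_2)\wedge\phi$ for $\mathcal{K}_n(T_2\wedge\phi)$ up to an error $\epsilon_n$ bounded by $R_n$, then use the commutativity property (condition 6) of the good approximation scheme, rather than an exact kernel identity) to pass to $\mathcal{K}_n(T_1)\wedge T_2\wedge\phi$---is exactly the route the paper takes, and your handling of (a) and (c) is fine (in fact (c) does not need (b): since $\mathcal{K}_n(T_1)$ is smooth, $\mathcal{K}_n(T_1)\wedge(\theta\wedge T_2)=\theta\wedge(\mathcal{K}_n(T_1)\wedge T_2)$ directly).

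The one genuine slip is in the ``hard part.'' First, $\mathcal{K}_m(T_1)=\mathcal{K}_m^{+}(T_1)-\mathcal{K}_m^{-}(T_1)$ need not be positive, so the inequality $\bigl|\int \mathcal{K}_m(T_1)\wedge\epsilon_n\bigr|\le\int \mathcal{K}_m(T_1)\wedge R_n$ is not justified; you should bound by $\int(\mathcal{K}_m^{+}(T_1)+\mathcal{K}_m^{-}(T_1))\wedge R_n$ instead, exactly as in the proof of Theorem~\ref{TheoremPullbackCompatibleWithWedgeProduct}. Second, and more importantly, you propose to apply a Lemma~\ref{LemmaQuasiPotential}-type decomposition to $T_1$ and then invoke Aeppli cohomology. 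But Lemma~\ref{LemmaQuasiPotential} is stated for positive \emph{closed} currents, and $T_1$ is only $dd^c$-closed; the Aeppli detour is both vague and unnecessary. The paper instead applies Lemma~\ref{LemmaQuasiPotential} to the \emph{closed} currents $R_n$, obtaining $R_n\le\alpha_n+dd^cS_n$ with $\|\alpha_n\|_{L^{\infty}},\|S_n\|\to 0$. Then
\[
\int_Y(\mathcal{K}_m^{+}(T_1)+\mathcal{K}_m^{-}(T_1))\wedge R_n
\le\int_Y(\mathcal{K}_m^{+}(T_1)+\mathcal{K}_m^{-}(T_1))\wedge\alpha_n
+\int_Y(\mathcal{K}_m^{+}(dd^cT_1)+\mathcal{K}_m^{-}(dd^cT_1))\wedge S_n,
\]
and the second term vanishes because $dd^cT_1=0$, while the first is bounded by $C\|T_1\|\,\|\alpha_n\|_{L^{\infty}}\to 0$ uniformly in $m$. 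This is both simpler and avoids the unjustified step.
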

Theorem \ref{TheoremSymmertyOfIntersectionCurrents} b) means that the intersection is symmetric, and Theorem \ref{TheoremSymmertyOfIntersectionCurrents} c) means that the intersection can be computed locally. For intersection of varieties we have the expected result
\begin{lemma}
Let $V_1$ and $V_2$ be irreducible subvarieties of codimensions $p$ and $q$ of $Y$. Assume that any component of $V_1\cap V_2$ has codimension $p+q$. Then $[V_1]\wedge [V_2]$ is well-defined. Here $[V_1]$ and $[V_2]$ are the currents of integration on $V_1$ and $V_2$.
\label{LemmaIntersectionOfVarieties}\end{lemma}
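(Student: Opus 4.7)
The plan is to reduce the question to the classical proper intersection theory of subvarieties on $Y\times Y$ via the kernel formulation of the regularization $\mathcal{K}_n$. Since $\mathcal{K}_n([V_1]) = (\pi_1)_*(K_n\wedge \pi_2^*[V_1])$ is a $C^{s+2}$ form and $[V_2]$ has order $0$, a projection-formula computation (pair with a test form $\phi$, push everything up to $Y\times Y$) yields
\begin{equation*}
\mathcal{K}_n([V_1])\wedge [V_2] \;=\; (\pi_1)_*\bigl(K_n\wedge \pi_2^*[V_1]\wedge \pi_1^*[V_2]\bigr) \;=\; (\pi_1)_*\bigl(K_n\wedge [V_2\times V_1]\bigr),
\end{equation*}
using the identity $\pi_2^*[V_1]\wedge \pi_1^*[V_2]=[V_2\times V_1]$ of positive closed currents on $Y\times Y$. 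The codimension hypothesis forces $\Delta_Y$ (codimension $\dim Y$) to meet $V_2\times V_1$ (codimension $p+q$) in the set $\{(x,x):x\in V_1\cap V_2\}$, of codimension exactly $\dim Y+p+q$ in $Y\times Y$. Hence the classical proper intersection $[\Delta_Y]\wedge [V_2\times V_1]$ is a well-defined positive closed current, and its pushforward by $\pi_1$ recovers the cycle-theoretic intersection $[V_1]\cdot [V_2]$ with the expected multiplicities along the components of $V_1\cap V_2$.

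The heart of the argument is then the weak convergence $K_n\wedge [V_2\times V_1]\to [\Delta_Y]\wedge [V_2\times V_1]$ on $Y\times Y$. I would establish this by applying Lemma \ref{LemmaQuasiPotential} to the positive closed current $[V_2\times V_1]$ on $Y\times Y$, obtaining a two-sided sandwich of the form $[V_2\times V_1]\le \alpha+dd^cS$ with $\alpha$ smooth and $S$ negative $DSH$. The piece $K_n\wedge \alpha$ converges to $[\Delta_Y]\wedge \alpha$ directly, since $\alpha$ is smooth and $K_n\to [\Delta_Y]$ weakly. For $K_n\wedge dd^cS$, pairing against a test form and transferring $dd^c$ onto $K_n$ (legitimate because $K_n$ is $dd^c$-closed) reduces the question to mass estimates for $K_n\wedge S$; the mass of $S$ and the concentration of $K_n$ near $\Delta_Y$ force the residual contribution to vanish outside any neighborhood of $\Delta_Y\cap \operatorname{supp}(V_2\times V_1)$, whose smallness is exactly what the proper-intersection hypothesis provides. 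Pushing forward by $\pi_1$ then yields the required limit of $\mathcal{K}_n([V_1])\wedge [V_2]$, and independence of the good approximation scheme is automatic, because every limit must coincide with the classical proper intersection of $V_1$ and $V_2$.

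The main obstacle is precisely this convergence step: Lemma \ref{TheoremRegularizationCompactibleWithWedgproduct} gives compatibility of the regularization with a \emph{smooth} form, but here $[V_2\times V_1]$ is singular, so Lemma \ref{TheoremRegularizationCompactibleWithWedgproduct} cannot be invoked directly. The proper-intersection hypothesis must be used in an essential way, the delicate point being to control $K_n\wedge [V_2\times V_1]$ in a shrinking neighborhood of $\Delta_Y$, where both factors concentrate. A rigorous execution will likely require either a local slicing argument in coordinates where $V_2\times V_1$ is transverse to $\Delta_Y$ at generic points of each component of $V_1\cap V_2$, or a dual application of Demailly-style regularization of $[V_2\times V_1]$ on $Y\times Y$ combined with monotonicity for wedge products of positive $dd^c$-closed currents; in either case one exploits that the two supports meet only along a subvariety of the expected codimension, so that the singular parts do not reinforce one another.
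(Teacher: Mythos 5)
There are two genuine gaps here. First, Definition \ref{DefinitionIntersectionCurrents} requires the limit $\lim_{n}\mathcal{K}_n([V_1])\wedge [V_2]$ to exist, with the same value, for \emph{every} good approximation scheme $\mathcal{K}_n$; these are abstract assignments satisfying conditions 1)--9) of Definition \ref{DefinitionGoodApproximation} and need not be of the kernel form $(\pi_1)_*(K\wedge\pi_2^*(\cdot))$. Your entire reduction to $Y\times Y$ --- the projection formula and the identification of $\mathcal{K}_n([V_1])\wedge[V_2]$ with $(\pi_1)_*(K_n\wedge[V_2\times V_1])$ --- is available only for the particular scheme built from the Dinh--Sibony kernels, so even a complete execution would not prove well-definedness in the sense of the paper; your closing claim that independence of the scheme is ``automatic'' is circular, since for a general scheme you have produced no expression to compare with the classical intersection cycle. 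Second, the convergence $K_n\wedge[V_2\times V_1]\to[\Delta_Y]\wedge[V_2\times V_1]$, which you rightly identify as the heart of the matter, is not delivered by the tools you invoke: Lemma \ref{LemmaQuasiPotential} gives only the one-sided inequality $[V_2\times V_1]\le\alpha+dd^cS$, not an equality, so there is no decomposition of $K_n\wedge[V_2\times V_1]$ into $K_n\wedge\alpha$ plus $K_n\wedge dd^cS$; and since $K_n$ is merely an $L^1$ form with singularities along $\Delta_Y$, the product $K_n\wedge dd^cS$ and the integration by parts you propose are themselves undefined without substantial extra work. You are candid that this step is open, but it is precisely the content of the lemma.

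The paper sidesteps both difficulties by staying on $Y$: it writes $[V_1]=\theta+dd^c(R^+-R^-)$ with $\theta$ smooth and $R^{\pm}$ positive $DSH$ currents that are continuous outside $V_1$ (Green currents, Proposition 2.1 of \cite{dinh-sibony5}). By conditions 4), 5) and 7) of Definition \ref{DefinitionGoodApproximation} the problem reduces to the convergence of $\mathcal{K}_n(R^+-R^-)\wedge[V_2]$, and this sequence converges locally on $Y\setminus(V_1\cap V_2)$: it vanishes off $V_2$, and off $V_1$ the forms $\mathcal{K}_n^{\pm}(R^{\pm})$ converge locally uniformly. Any two cluster points then differ by a current of bidimension $(k-p-q+1,k-p-q+1)$ supported on the $(k-p-q)$-dimensional set $V_1\cap V_2$, which must vanish by the Federer-type support theorem of Bassanelli; this is exactly where the proper-intersection hypothesis enters. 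Since that argument uses only the axioms of a good approximation scheme, it applies uniformly to all of them. If you want to salvage your product/diagonal picture, you would still need an analogue of this potential-plus-support-theorem step on $Y\times Y$, at which point the detour through the diagonal buys nothing.
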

The rest of this paper is organized as follows. In Section 2 we recall the construction of weak regularization for the diagonal and prove Lemmas \ref{LemmaQuasiPotential} and \ref{TheoremRegularizationCompactibleWithWedgproduct}. In Section 3 we prove the other results.  

\section{Proofs of Lemmas \ref{LemmaQuasiPotential} and \ref{TheoremRegularizationCompactibleWithWedgproduct}}

Let $Y$ be a compact Kahler manifold of dimension $k$. Let $\pi _1,\pi _2:Y\times Y\rightarrow Y$ be the two projections, and let $\Delta _Y\subset Y\times Y$ be the diagonal. Let $\omega _Y$ be a K\"ahler $(1,1)$ form on $Y$.

For any $p$, we define $DSH^{p}(Y)$ (see \cite{dinh-sibony1}) to be the space of $(p,p)$ currents $T=T_1-T_2$, where $T_i$ are positive
currents, such that $dd^cT_i=\Omega _i^+-\Omega _i^-$ with $\Omega _i^{\pm}$ positive closed. Observe that $||\Omega _i^{+}||=||\Omega _i^-||$ since they
are cohomologous to each other because $dd^c(T_i)$ is an exact current. Define the $DSH$-norm of $T$ as
\begin{eqnarray*}
||T||_{DSH}:=\min \{||T_1||+||T_2||+||\Omega _1^{+}||+||\Omega _2^{+}||,~T_i,~\Omega _i,~\mbox{as above}\}.
\end{eqnarray*}
Using compactness of positive currents, it can be seen that we can find $T_i,~\Omega _i^{\pm}$ which realize $||T||_{DSH}$, hence the minimum on the RHS
of the definition of $DSH$ norm. We say that $T_n\rightharpoonup T$ in $DSH^p(Y)$ if $T_n$ weakly converges to $T$ and $||T_n||_{DSH}$ is bounded.

Recall that a function $\varphi$ is quasi-PSH if it is upper semi-continuous, belongs to $L^1$, and $dd^c(\varphi )=T-\theta$, where $T$ is a positive closed $(1,1)$ current and $\theta$ is a closed smooth $(1,1)$ form. We also call $\varphi$ a $\theta$-plurisubharmonic function. 
\begin{remark}
The following consideration from \cite{bost-gillet-soule} and \cite{dinh-sibony5} is used in both proof of Lemam \ref{LemmaQuasiPotential} and the construction of the kernels $K_n$ in Lemma \ref{TheoremRegularizationCompactibleWithWedgproduct}. Let $k=$ dimension of $Y$. Let $\pi :\widetilde{Y\times Y}\rightarrow Y\times Y$ be the blowup of $Y\times Y$ at $\Delta _Y$. Let $\widetilde{\Delta} _Y=\pi ^{-1}(\Delta _Y)$ be the exceptional divisor. Then there is a closed smooth $(1,1)$ form $\gamma$ and a negative quasi-plurisubharmonic function $\varphi$ so that $dd^c\varphi =[\widetilde{\Delta} _Y]-\gamma $. We choose a strictly positive closed smooth $(k-1,k-1)$ form $\eta $ so that $\pi _*([\widetilde{\Delta} _Y]\wedge \eta )=[\Delta _Y]$. 
\label{Remark2}\end{remark}
Next we give the proof of Lemma \ref{LemmaQuasiPotential}.
\begin{proof} (Of Lemma \ref{LemmaQuasiPotential})
Notations are as in Remark \ref{Remark2}. Define $H=\pi _*(\varphi \eta )$. Then $H$ is a negative $(k-1,k-1)$ current on $Y\times Y$.  

We write $\gamma =\gamma ^+-\gamma ^-$ for strictly positive closed smooth $(1,1)$ forms $\gamma ^{\pm}$. If we define $\Phi ^{\pm}=\pi _*(\gamma ^{\pm}\wedge \eta )$ then $\Phi ^{\pm}$ are positive closed $(k,k)$ currents with $L^1$ coefficients. In fact (see \cite{dinh-sibony1}) $\Phi ^{\pm}$ are smooth away from the diagonal $\Delta _Y$, and the singularities of $\Phi ^{\pm}(y_1,y_2)$ and their derivatives are bounded by $|y_1-y_2| ^{-(2k-2)}$ and $|y_1-y_2|^{-(2k-1)}$. Moreover 
\begin{eqnarray*} 
dd^cH=\pi _*(dd^c\varphi \wedge \eta )=\pi _*([\widetilde{\Delta} _Y ]\wedge \eta -(\gamma ^+-\gamma ^-)\wedge \eta )=[\Delta _Y]-(\Phi ^+-\Phi ^-).
\end{eqnarray*} 
Consider $S_1=(\pi _1)_*(H\wedge \pi _2^*(T))$ and $R_1^{\pm}=(\pi _1)_*(\Phi ^{\pm}\wedge T)$. Then $S_1$ is a negative current, and $R_1^{\pm}$ are positive closed currents. Moreover 
\begin{eqnarray*}
dd^cS_1=(\pi _1)_*(dd^cH\wedge \pi _2^*(T))=T-R_1^++R_1^-.
\end{eqnarray*}
Therefore $T\leq R_1^++dd^cS_1$. Moreover $R_1^+$ is a current with $L^1$ coefficients, and there is a constant $C_1>0$ independent of $T$ so that $||S_1||, ||R_1||_{L^1}\leq C_1||T||$ (see e.g. Lemma 2.1 in \cite{dinh-sibony1}). 

If we apply this process for $R_1^+$ instead of $T$ we find a positive closed current $R_2^+$ with coefficients in $L^{1+1/(2k+2)}$ and a negative current $S_2$ so that $R_1^+\leq R_2^++dd^cS_2$. Moreover 
\begin{eqnarray*}
||R_2^+||_{L^{1+1/(2k+2)}},||S_2||\leq C_2||R_1^{+}||_{L^1}\leq C_1C_2||T||
\end{eqnarray*}
for some constant $C_2>0$ independent of $T$. After iterating this process a finite number of times we find a continuous form $R$ and a negative current $S$ so that $T\leq R+dd^cS$. Moreover, $||R||_{L^{\infty}},||S||\leq C||T||$ for some constant $C>0$ independent of $T$. Since we can bound $R$ by $\omega _Y^p$ upto a multiple constant of size $||R||_{L^{\infty}}$, we are done.   
 \end{proof}
Next we recall the construction of the kernels $K_n$ from Section 3 in \cite{dinh-sibony1}. Notations are as in Remark \ref{Remark2}. Observe that $\varphi $ is smooth out of $[\widetilde{\Delta _Y}]$, and $\varphi ^{-1}(-\infty )=\widetilde{\Delta _Y}$. Let $\chi :\mathbb{R}\cup \{-\infty\} \rightarrow \mathbb{R}$ be a smooth increasing convex function such that $\chi (x)=0$ on $[-\infty ,-1]$, $\chi (x)=x$ on $[1, +\infty ]$, and $0\leq \chi '\leq 1$.  Define $\chi _n(x)=\chi (x+n)-n$, and $\varphi _n=\chi _n\circ \varphi $. The functions $\varphi _n$ are smooth decreasing to $\varphi$, and $dd^c \varphi _n\geq -\Theta $ for every $n$, where $\Theta$ is a strictly positive closed smooth $(1,1)$ form so that $\Theta -\gamma$ is strictly positive. Then we define $\Theta _n^+=dd^c \varphi _n+\Theta $ and $\Theta _n^-=\Theta ^-=\Theta -\gamma$. Finally $K_n^{\pm}=\pi _*(\Theta _n^{\pm}\wedge \eta )$, and $K_n=K_n^{+}-K_n^-$.     

\begin{proof} (Of Lemma \ref{TheoremRegularizationCompactibleWithWedgproduct})

Let us define $H_n=K_n(T\wedge \theta )-K_n(T)\wedge \theta$. Since $T$ and $\theta$ may not be either positive or $dd^c$-closed, a priori $H_n$ is neither. However, we will show that there are positive $dd^c$-closed currents $R_n$ such that $\lim _{n\rightarrow\infty}||R_n||=0$ and $-R_n\leq H_n\leq R_n$. 

By definition we have
\begin{eqnarray*}
H_n(y)=\int _{z\in Y}K_n(y,z)\wedge (\theta (z)-\theta (y))\wedge T(z).
\end{eqnarray*}
Fix a number $\delta >0$. Then by the construction of $K_n$, there is an integer $n_{\delta}$ so that if $n\geq n_{\delta}$ and $|y-z|\geq \delta$ then $K_n(y,z)=0$. Thus
\begin{eqnarray*}
H_n(y)=\int _{z\in Y,~|z-y|<\delta}(K_n^+(y,z)-K_n^-(y,z))\wedge (\theta (z)-\theta (y))\wedge T(z).
\end{eqnarray*}
We define $h (\delta )=\max _{y,z\in Y: ~|y-z|\leq\delta}|\theta (y)-\theta (z)|$. Because $\theta$ is a continuous form, we have $\lim _{\delta\rightarrow 0}h(\delta )=0$. Moreover, since $Y\times Y$ is compact, there is a constant $C>0$ independent of $\theta$ and $\delta$ so that 
\begin{eqnarray*}
-h(\delta )C(\omega _Y(y)+\omega _Y(z))^q\leq \theta (z)-\theta (y)\leq h(\delta )C(\omega _Y(y)+\omega _Y(z))^q
\end{eqnarray*}
for all $\delta \leq 1$ and for all $|y-z|\leq \delta$. Since $K_n^{\pm}(y,z)$ are strongly positive closed and $-R\leq T\leq R$, it follows that 

\begin{eqnarray*}
H_n(y)&=&\int _{z\in Y,~|z-y|<\delta}(K_n^+(y,z)-K_n^-(y,z))\wedge (\theta (z)-\theta (y))\wedge T(z)\\
&\leq&h(\delta)C\int _{z\in Y,~|z-y|<\delta}(K_n^+(y,z)+K_n^-(y,z))\wedge (\omega _Y(y)+\omega _Y(z))^q\wedge R(z)\\
&\leq&h(\delta)C\int _{z\in Y}(K_n^+(y,z)+K_n^-(y,z))\wedge (\omega _Y(y)+\omega _Y(z))^q\wedge R(z).
\end{eqnarray*}
Thus $H_n(y)\leq R_n(y)$ where 
\begin{eqnarray*}
R_n(y)=h(\delta)C\int _{z\in Y}(K_n^+(y,z)+K_n^-(y,z))\wedge (\omega _Y(y)+\omega _Y(z))^q\wedge R(z),
\end{eqnarray*}
for $n_{\delta }\leq n<n_{\delta /2} $. Similarly we have $H_n(y)\geq -R_n(y)$. It can be checked that $R_n(y)$ is positive $dd^c$-closed. Moreover, there is a constant $C_1>0$ independent of $n$, $\delta$, $R$ and $\theta$ so that 
\begin{equation}
||R_n||\leq h(\delta )C_1||R||, \label{EquationEstimateSn}
\end{equation}
for $n\geq n_{\delta}$. This shows that $||R_n||\rightarrow 0$ as $n\rightarrow \infty$.
\end{proof}
\begin{remark}
By the estimate (\ref{EquationEstimateSn}) and by iterating we obtain the following result: Let $T$, $R$ and $\theta$ be as in Lemma \ref{TheoremRegularizationCompactibleWithWedgproduct}. Then there are positive $dd^c$-closed $(p+q,p+q)$ currents $R_{n_1,n_2,\ldots ,n_l}$ so that
\begin{eqnarray*}
-R_{n_1,n_2,\ldots ,n_l}\leq K_{n_1}\circ K_{n_2}\circ \ldots K_{n_l}(T\wedge \theta )-K_{n_1}\circ K_{n_2}\circ \ldots K_{n_l}(T)\wedge \theta \leq R_{n_1,n_2,\ldots ,n_l},
\end{eqnarray*}
and 
\begin{eqnarray*}
\lim _{n_1,n_2,\ldots ,n_l\rightarrow \infty}||R_{n_1,n_2,\ldots ,n_l}||=0.
\end{eqnarray*}

We give the proof of this claim for example when $l=2$. We will write the $R_n$ in Lemma \ref{TheoremRegularizationCompactibleWithWedgproduct} by $R_n(R)$ to emphasize its dependence on $R$. Writing 
\begin{eqnarray*}
&&K_{n_1}\circ K_{n_2}(T\wedge \theta )-K_{n_1}\circ K_{n_2}(T)\wedge \theta\\
&=&[K_{n_1}(K_{n_2}(T\wedge \theta )-K_{n_2}(T)\wedge \theta )]+[K_{n_1}(K_{n_2}(T)\wedge \theta )-K_{n_1}(K_{n_2}(T))\wedge \theta ],
\end{eqnarray*}
and choosing 
\begin{eqnarray*}
R_{n_1,n_2}=K_{n_1}^+(R_{n_2}(R))+K_{n_1}^-(R_{n_2}(R))+R_{n_1}(K_{n_2}^+(R))+R_{n_1}(K_{n_2}^-(R)),
\end{eqnarray*}
we see that 
\begin{eqnarray*}
-R_{n_1,n_2}\leq K_{n_1}\circ K_{n_2}(T\wedge \theta )-K_{n_1}\circ K_{n_2}(T)\wedge \theta \leq R_{n_1,n_2}.
\end{eqnarray*}
That $R_{n_1,n_2}$ are positive $dd^c$-closed follows from the properties of the kernels $K_n$. It remains to bound the masses of $R_{n_1,n_2}$. By (\ref{EquationEstimateSn}) we have
\begin{eqnarray*}
||R_{n_1,n_2}||&\leq& C_1(||R_{n_2}(R)||+||R_{n_1}(K_{n_2}^+(R))||+||R_{n_2}(K_{n_2}^-(R))||)\\
&\leq&C_2h(\delta )(||R||+||K_{n_2}^+(R)||+||K_{n_2}^-(R)||)\\
&\leq &C_3 h(\delta )||R||,
\end{eqnarray*}
for constants $C_1,C_2,C_3$ and for all $n_1,n_2\geq n_{\delta}$, here $n_{\delta}$ is the constant in the proof of Lemma \ref{TheoremRegularizationCompactibleWithWedgproduct}.
\label{Remark1}\end{remark}

\section{Proofs of the consequences}

We first give the definition of a good approximation scheme by $C^{s}$ forms for $DSH$ currents.
\begin{definition}
Let $Y$ be a compact Kahler manifold. Let $s\geq 0$ be an integer. We define a good approximation scheme by $C^s$ forms for $DSH$ currents on $Y$ to be an assignment that for a $DSH$ current $T$ gives    two sequences $\mathcal{K}_n^{\pm}(T)$ (here $n=1,2,\ldots $) where $\mathcal{K}_n^{\pm}(T)$ are $C^s$ forms of the same bidegrees as $T$, so that $\mathcal{K}_n(T)=\mathcal{K}_n^+(T)-\mathcal{K}_n^-(T)$  weakly converges to $T$, and moreover the following properties are satisfied:

1) Boundedness: The $DSH$ norms of  $\mathcal{K}_n^{\pm}(T)$ are uniformly bounded.

2) Positivity: If $T$ is positive then $\mathcal{K}_n^{\pm}(T)$ are positive, and $||\mathcal{K}_n^{\pm}(T)||$ is uniformly bounded with respect to n.

3) Closedness: If $T$ is positive closed then $\mathcal{K}_n^{\pm}(T)$ are positive closed.  

4) Continuity: If $U\subset Y$ is an open set so that $T|_U$ is a continuous form then $\mathcal{K}_n^{\pm}(T)$ converges locally uniformly on $U$.

5) Additivity: If $T_1$ and $T_2$ are two $DSH^p$ currents, then $\mathcal{K}_n^{\pm}(T_1+T_2)=\mathcal{K}_n^{\pm}(T_1)+\mathcal{K}_n^{\pm}(T_2)$.

6) Commutativity: If $T$ and $S$ are $DSH$ currents with complements bidegrees then 
\begin{eqnarray*}
\lim _{n\rightarrow\infty}[\int _Y\mathcal{K}_n(T)\wedge S-\int _YT\wedge \mathcal{K}_n(S)]=0. 
\end{eqnarray*}

7) Compatibility with the differentials: $dd^c\mathcal{K}_n^{\pm}(T)=\mathcal{K}_n^{\pm}(dd^cT)$.

8) Condition on support: The support of $\mathcal{K}_n(T)$ converges to the support of $T$. By this we mean that if $U$ is an open neighborhood of $supp(T)$, then  there is $n_0$ so that when $n\geq n_0$ then $supp(\mathcal{K}_n(T))$ is contained in $U$. Moreover, the number $n_0$ can be chosen so that it depends only on $supp(T)$ and $U$ but not on the current $T$.

9) Compatibility with wedge product: Let $T$ be a $DSH$ $(p,p)$ current and let $\theta$ be a continuous $(q,q)$ form on $Y$. Assume that there is a positive $dd^c$-closed current $R$ so that $-R\leq T\leq R$. Then there are positive $dd^c$-closed $(p+q,p+q)$ currents $R_n$ so that $\lim _{n\rightarrow\infty}||R_n||=0$ and 
\begin{eqnarray*}
-R_n\leq \mathcal{K}_n(T\wedge \theta )-\mathcal{K}_n(T)\wedge \theta\leq R_n, 
\end{eqnarray*}
for all $n$. 

If $R$ is strongly positive or closed then we can choose $R_n$ to be so.
\label{DefinitionGoodApproximation}\end{definition}

Let $K_n$ be the weak regularization for the diagonal $\Delta _Y$ as in Section 2. Let $l$ be a large integer dependent on $s$, and let $(m_1)_n,\ldots ,(m_l)_n$ be sequences of positive integers satisfying $(m_i)_n=(m_{l+1-i})_n$ and $\lim _{n\rightarrow\infty}(m_i)_n=\infty$ for any $1\leq i\leq l$. In \cite{truong} we showed that if we choose $\mathcal{K}_n=K_{(m_1)_n}\circ K_{(m_2)_n}\circ \ldots K_{(m_l)_n}$ then it satisfies conditions 1)-8). Remark \ref{Remark1} shows that it also satisfies condition 9).

Note that by condition 6), if $T$ is a $DSH$ current then $f^{\sharp}(T)$ is well-defined iff there is a number $s\geq 0$ and a current $S$ so that for any good approximation scheme by $C^{s+2}$ forms $\mathcal{K}_n$ then $\lim _{n\rightarrow\infty}f^*(\mathcal{K}_n(T))=S$.

\begin{proof} (Of Theorem \ref{TheoremPullbackCompatibleWithWedgeProduct})

a) We let $s\geq 0$ be a number so that for any good approximation scheme by $C^{s+2}$ forms $\mathcal{K}_n$ and for any smooth form $\alpha$ on $X$ then 
\begin{eqnarray*}
\int _Xf^{\sharp}(T)\wedge \alpha =\lim _{n\rightarrow\infty}\int _YT\wedge \mathcal{K}_n(f_*(\alpha )).
\end{eqnarray*}

Then for the proof of a) it suffices to show that for any smooth form $\beta$ on $X$ then

\begin{eqnarray*}
\lim _{n\rightarrow\infty}\int _YT\wedge \theta \wedge \mathcal{K}_n(f_*(\beta ))=\int _Xf^{\sharp}(T)\wedge f^*(\theta )\wedge \beta .
\end{eqnarray*}
If we can show 
\begin{equation}
\lim _{n\rightarrow\infty}\int _YT\wedge (\theta \wedge \mathcal{K}_n(f_*(\beta ))-\mathcal{K}_n(\theta \wedge f_*(\beta )))=0
\label{Equation4}
\end{equation}
then we are done, since we have $\theta \wedge f_*(\beta ))=f_*(f^*(\theta )\wedge \beta  )$ because $f$ is holomorphic, and hence
\begin{eqnarray*}
\lim _{n\rightarrow\infty}\int _YT\wedge \mathcal{K}_n(\theta \wedge f_*(\beta ))=\lim _{n\rightarrow\infty}\int _YT\wedge \mathcal{K}_n(f_*(f^*(\theta )\wedge \beta  ))=\int _Yf^{\sharp}(T)\wedge (f^*(\theta )\wedge \beta ).
\end{eqnarray*}
Now we proceed to proving (\ref{Equation4}). For a fixed $n$ we have 
\begin{eqnarray*}
&&\int _YT\wedge (\theta \wedge \mathcal{K}_n(f_*(\beta ))-\mathcal{K}_n(\theta \wedge f_*(\beta )))\\
&=&\lim _{m\rightarrow\infty}\int _Y\mathcal{K}_m(T)\wedge (\theta \wedge \mathcal{K}_n(f_*(\beta ))-\mathcal{K}_n(\theta \wedge f_*(\beta ))).
\end{eqnarray*} 
The advantage of this is that $\mathcal{K}_m(T)$ are continuous forms, hence if we have  bounds of $\theta \wedge \mathcal{K}_n(f_*(\beta ))-\mathcal{K}_n(\theta \wedge f_*(\beta ))$ by currents of order zero we can use them in the integral and then take limit when $m\rightarrow \infty$.

Because $f_*(\beta )$ is bound by a multiple of $f_*(\omega _X^{dim (X)-p-q})$ and the latter is strongly positive closed, by condition 9) of Definition \ref{DefinitionGoodApproximation} there are strongly positive closed currents $R_n$ with $||R_n||\rightarrow 0$ and 
$$-R_n\leq \theta \wedge \mathcal{K}_n(f_*(\beta ))-\mathcal{K}_n(\theta \wedge f_*(\beta ))\leq R_n,$$ for all $n$. Since $-\tau \leq T\leq \tau $, we have $-(\mathcal{K}_m^{+}(\tau )+\mathcal{K}_m^{-}(\tau ))\leq \mathcal{K}_m(T)\leq \mathcal{K}_m^{+}(\tau )+\mathcal{K}_m^{-}(\tau )$. Since $\mathcal{K}_m^{+}(\tau )+\mathcal{K}_m^{-}(\tau )$ are positive $C^2$ forms, from the above estimates we obtain
\begin{eqnarray*}
-\int _Y(\mathcal{K}_m^+(\tau )+\mathcal{K}_m^-(\tau ))\wedge R_n&\leq& \int _Y\mathcal{K}_m(T)\wedge (\theta \wedge \mathcal{K}_n(f_*(\beta ))-\mathcal{K}_n(\theta \wedge f_*(\beta )))\\
&\leq& \int _Y(\mathcal{K}_m^+(\tau )+\mathcal{K}_m^-(\tau ))\wedge R_n.
\end{eqnarray*}
Hence (\ref{Equation4}) follows if we can show that 
\begin{eqnarray*} 
\lim _{n\rightarrow\infty}\lim _{m\rightarrow\infty}\int _Y(\mathcal{K}_m^+(\tau )+\mathcal{K}_m^-(\tau ))\wedge R_n=0.
\end{eqnarray*}
By Lemma \ref{LemmaQuasiPotential}, there are a smooth closed form $\alpha _n$ and a strongly negative current $S_n$ for which $R_n\leq \alpha _n+dd^cS_n$ and $||\alpha _n||_{L^{\infty}}, ||S_n||\rightarrow 0$. Therefore 
\begin{eqnarray*}  
0&\leq& \int _Y(\mathcal{K}_m^+(\tau )+\mathcal{K}_m^-(\tau ))\wedge R_n\\
&\leq& \int _Y(\mathcal{K}_m^+(\tau )+\mathcal{K}_m^-(\tau ))\wedge \alpha _n+\int _Y(\mathcal{K}_m^+(\tau )+\mathcal{K}_m^-(\tau ))\wedge dd^cS_n . 
\end{eqnarray*}
Since the currents $\mathcal{K}_m^{\pm}(\tau )$ are positive whose masses are uniformly bounded, it follows from $||\alpha _n||_{L^{\infty}}\rightarrow 0$ that 
\begin{eqnarray*}
\lim _{n\rightarrow\infty}\lim _{m\rightarrow\infty}\int _Y(\mathcal{K}_m^+(\tau )+\mathcal{K}_m^-(\tau ))\wedge \alpha _n=0.
\end{eqnarray*}
Now we estimate the other term. We have 
\begin{eqnarray*}
\int _Y(\mathcal{K}_m^+(\tau )+\mathcal{K}_m^-(\tau ))\wedge dd^cS_n=\int _Y(\mathcal{K}_m^+(dd^c\tau )+\mathcal{K}_m^-(dd^c\tau ))\wedge S_n.
\end{eqnarray*}
Because $S_n$ is strongly negative and $dd^c\tau \geq -\gamma $, the last integral can be bound from above by
\begin{eqnarray*}
\int _Y(\mathcal{K}_m^+(dd^c\tau )+\mathcal{K}_m^-(dd^c\tau ))\wedge S_n\leq \int _Y(\mathcal{K}_m^+(-\gamma  )+\mathcal{K}_m^-(-\gamma ))\wedge S_n.
\end{eqnarray*}
Since $\gamma $ is smooth, by condition 4) of Definition \ref{DefinitionGoodApproximation} and the fact that $||S_n||\rightarrow 0$, we obtain
\begin{eqnarray*}
\lim _{n\rightarrow\infty}\lim _{m\rightarrow\infty}\int _Y(\mathcal{K}_m^+(-\gamma  )+\mathcal{K}_m^-(-\gamma ))\wedge S_n=0.
\end{eqnarray*}
Thus, whatever the limit of 
\begin{eqnarray*}
\int _Y(\mathcal{K}_m^+(\tau )+\mathcal{K}_m^-(\tau ))\wedge dd^cS_n
\end{eqnarray*} 
is, it is non-positive. The proof of (\ref{Equation4}) and hence of a) is finished.

b) The proof of b) is similar to that of a).
\end{proof}

Now we give an application to pulling back of (non-smooth) forms whose coefficients are bounded by a quasi-PSH function. 
\begin{proposition}
Let $T$ be a $(p,p)$ form whose coefficients are bounded by a quasi-PSH function $\varphi$. Then $f^{\sharp}(T)$ is well-defined. 
\label{PropositionPullbackOfFormsWithBoundedCoefficients}
\end{proposition}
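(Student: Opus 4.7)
The plan is to apply Theorem \ref{TheoremPullbackCompatibleWithWedgeProduct}(b) after a partition-of-unity reduction and a truncation of $\varphi$. Choose a finite cover of $Y$ by coordinate charts with a subordinate partition of unity $\{\rho _j\}$. Writing $T=\sum _j \rho _j T$ and expanding each summand in the local frame, together with decomposing the coefficients into real/imaginary and positive/negative parts and assuming (after subtracting an upper bound) that $\varphi \le -1$, reduces the claim by linearity to the model case $T_0 = \psi \cdot \Phi$ supported in a single chart, where $\Phi$ is a smooth $(p,p)$ form and $\psi$ is a scalar measurable function with $0 \le \psi \le -\varphi$.

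For each integer $m\ge 1$, set $\varphi _m := \max (\varphi ,-m)$, a bounded quasi-plurisubharmonic function with $\varphi _m \downarrow \varphi$, and split $\psi = \psi _m + r_m$ where $\psi _m := \min (\psi ,-\varphi _m)\in [0,m]$ increases pointwise to $\psi$ and $r_m := \psi - \psi _m \ge 0$ is supported in $\{\varphi <-m\}$ and bounded by $-\varphi$. Then $\int _Y r_m \, dV \to 0$ as $m\to \infty$, since $-\varphi \in L^1(Y)$ by quasi-plurisubharmonicity.

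For each fixed $m$, Theorem \ref{TheoremPullbackCompatibleWithWedgeProduct}(b) applies with ``its $T$'' equal to the bounded $DSH$ $(0,0)$ current $-\varphi _m$, dominated by the positive pseudo-$dd^c$-plurisubharmonic constant function $\tau _m = m$, and ``its $\theta$'' equal to $\Phi$. The hypothesis on $(\pi _Y|_{\Gamma _f})^{\sharp }(-\varphi _m)$ holds because $-\varphi _m$ is bounded quasi-plurisubharmonic, so its pullback to the desingularization of the graph $\Gamma _f$ is the classical composition; this fits the framework used in \cite{truong}. The theorem then gives that $f^{\sharp }((-\varphi _m)\Phi )$ is well-defined. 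Since $0\le \psi _m \le -\varphi _m$ pointwise, a parallel application, with the same dominator $\tau _m=m$, yields that $f^{\sharp }(\psi _m \Phi )$ is well-defined.

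Finally, let $m\to \infty$: the truncated pullbacks $f^{\sharp }(\psi _m \Phi )$ converge by monotone convergence $\psi _m \uparrow \psi$ with $L^1$-dominator $|\varphi |$, while the remainder $r_m \Phi$ contributes an amount whose $L^1$-mass tends to zero. Combined with the commutativity condition (6) of a good approximation scheme, this gives convergence of the defining integrals $\int _Y T_0 \wedge \mathcal{K}_n(f_*\alpha )$ as $n\to \infty$, with a limit independent of the choice of $\mathcal{K}_n$. The main obstacle is verifying that the truncated $\psi _m$ --- bounded and measurable but not itself quasi-plurisubharmonic --- can play the role of a $DSH$ current in Theorem \ref{TheoremPullbackCompatibleWithWedgeProduct}; this is handled using the pointwise bound $0\le \psi _m \le -\varphi _m$ together with the regularization-compatibility condition (9) of a good approximation scheme, which transfers $DSH$-type control from $-\varphi _m$ to $\psi _m$ with estimates uniform in $m$ suitable for passing to the limit.
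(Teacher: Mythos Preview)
Your truncation strategy has two genuine gaps that are not repaired by the remarks in your last paragraph.

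First, the appeal to Theorem~\ref{TheoremPullbackCompatibleWithWedgeProduct}(b) for $\psi_m$ is not justified. That theorem requires $T$ to be a $DSH$ current bounded in absolute value by a positive pseudo-$dd^c$-plurisubharmonic $\tau$; your $\psi_m$ is merely bounded measurable, and nothing ensures that $dd^c\psi_m$ is even a measure, let alone a difference of positive closed currents. Moreover, the good approximation scheme $\mathcal{K}_n$ in Definition~\ref{DefinitionGoodApproximation} is only specified on $DSH$ currents, so $\mathcal{K}_n(\psi_m)$ need not make sense. Your proposed rescue via condition~(9) does not work: condition~(9) applies to a $DSH$ current $T$ bounded by a positive \emph{$dd^c$-closed} $R$, and $-\varphi_m$ is neither $dd^c$-closed nor does it confer $DSH$ regularity on $\psi_m$. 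The pointwise sandwich $0\le\psi_m\le -\varphi_m$ simply does not transfer the structural hypotheses that Theorem~\ref{TheoremPullbackCompatibleWithWedgeProduct} needs.

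Second, the passage $m\to\infty$ requires that the remainder $\int_Y r_m\Phi\wedge\mathcal{K}_n(f_*\alpha)$ tend to $0$ \emph{uniformly in $n$}. You argue from $\|r_m\|_{L^1}\to 0$, but $f_*\alpha$ is singular along the critical values of $f$, so $\mathcal{K}_n(f_*\alpha)$ is not bounded in $L^\infty$ uniformly in $n$; pairing an $L^1$-small function against forms whose $L^\infty$ norms may blow up gives no control. Without an additional ingredient this step fails.

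The paper proceeds differently and avoids both problems. After the same partition-of-unity reduction (with $\varphi\le\psi\le 0$), it invokes Theorem~\ref{TheoremPullbackCompatibleWithWedgeProduct}(a) to reduce to showing that $f^{\sharp}(\psi)$ is well-defined for the \emph{function} $\psi$, and then argues directly at the $(0,0)$ level: the functionals $\langle S_n^{\pm},\alpha\rangle=\int_Y\psi\,\mathcal{K}_n^{\pm}(f_*\alpha)$ satisfy $0\ge S_n^{\pm}\ge f^*(\mathcal{K}_n^{\pm}(\varphi))$, and the right-hand sides form a mass-bounded family of $f^*(\Theta)$-plurisubharmonic functions, hence are $L^1$-precompact. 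Consequently every cluster point of $S_n$ is squeezed between $0$ and an $L^1$ function, so it carries no mass on Lebesgue-null sets; uniqueness then follows because on the Zariski-open set where $f$ is a submersion, $f_*\alpha$ is smooth and condition~(4) pins down the limit explicitly. The compactness of quasi-PSH functions is the substitute for the uniform-in-$n$ estimate you are missing, and working with the scalar $\psi$ rather than $\psi_m\Phi$ bypasses the $DSH$ issue entirely.
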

\begin{proof}
By desingularizing the graph $\Gamma _f$ if needed and using Theorem 4 in \cite{truong}, we can assume without loss of generality that $f$ is holomorphic. By subtracting a constant from $\varphi$ if needed, we can assume that $\varphi \leq 0$. By using partition of unity, we reduce the problem to the case where $T=\psi \theta$ where $\psi$ is a function with $0\geq \psi \geq \varphi$ and $\theta$ is a smooth form. By Theorem \ref{TheoremPullbackCompatibleWithWedgeProduct} a), for a proof of Proposition \ref{PropositionPullbackOfFormsWithBoundedCoefficients} it suffices to show that $f^{\sharp}(\psi )$ is well-defined. To this end we will show the existence of a current $S$ so that for any smooth form $\alpha$ and any good approximation scheme by $C^2$ forms $\mathcal{K}_n$ then
\begin{equation}
\lim _{n\rightarrow\infty}\int _Y\psi \wedge \mathcal{K}_n(f_*(\alpha ))=\int _XS\wedge \alpha . 
\label{EquationProof1}\end{equation}
We define linear functionals $S_n$ and $S_n^{\pm}$ on top forms on $X$ by the formulas
\begin{eqnarray*}
<S_n,\alpha >&=&\int _Y\psi \wedge \mathcal{K}_n(f_*(\alpha )),\\
<S_n^{\pm},\alpha >&=&\int _Y\psi \wedge \mathcal{K}_n^{\pm}(f_*(\alpha )).
\end{eqnarray*}  
Then $S_n=S_n^+-S_n^-$, and it can be checked that $S_n^{\pm}$ are negative $(0,0)$ currents, and hence $S_n$ is a current of order $0$. Moreover, if $\alpha$ is a positive smooth measure then 
\begin{eqnarray*} 
0\geq <S_n^{\pm},\alpha >&=&\int _Y\psi \wedge \mathcal{K}_n^{\pm}(f_*(\alpha ))\\
&\geq&\int _Y\varphi \wedge \mathcal{K}_n^{\pm}(f^*(\alpha ))\\
&=&\int _Xf^*(\mathcal{K}_n^{\pm}(\varphi ))\wedge \alpha .
\end{eqnarray*}
Thus $0\geq S_n^{\pm}\geq f^*(\mathcal{K}_n^{\pm}(\varphi ))$ for all $n$.

Let us write $dd^c(\varphi )=T-\theta $ where $T$ is a positive closed $(1,1)$ current, and $\theta$ is a smooth closed $(1,1)$ form. By property 4) of Definition \ref{DefinitionGoodApproximation}, there is a strictly positive closed smooth $(1,1)$ form $\Theta$ so that $\Theta \geq \mathcal{K}_n^{\pm}(\theta )$ for any $n$. Then $f^*(\mathcal{K}_n^{\pm}(\varphi ))$ are negative $C^2$ forms so that 
\begin{eqnarray*}
dd^cf^*(\mathcal{K}_n^{\pm}(\varphi ))&=&f^*(\mathcal{K}_n^{\pm}(dd^c\varphi ))=f^*(\mathcal{K}_n^{\pm}(T-\theta ))\\
&\geq&f^*(\mathcal{K}_n^{\pm}(-\theta ))\geq -f^*(\Theta ) 
\end{eqnarray*}
for any $n$, i.e they are negative $f^*(\Theta )$-plurisubharmonic functions. Moreover the sequence of currents $f^*(\mathcal{K}_n^{\pm}(\varphi ))$ has uniformly bounded mass (see the proof of Theorem 6 in \cite{truong}). Therefore, by the compactness of this class of functions (see Chapter 1 in \cite{demailly}), after passing to a subsequence if needed, we can assume that $f^*(\mathcal{K}_n^{\pm}(\varphi ))$ converges in $L^1$ to negative functions denoted by $f^*(\varphi ^{\pm})$. Let $S^{\pm}$ be any cluster points of $S_n^{\pm}$. Then $0\geq S^{\pm}\geq f^*(\varphi ^{\pm})$, which shows that any cluster point $S=S^+-S^-$ of $S_n$ has no mass on sets of Lebesgue measure zero. Hence to show that $S$ is uniquely defined, it suffices to show that $S$ is uniquely defined outside a proper analytic subset of $Y$. 

Let $E$ be a proper analytic subset of $Y$ so that $f:X-f^{-1}(E)\rightarrow Y-E$ is a holomorphic submersion. If $\alpha$ is a smooth measure whose support is compactly contained in $X-f^{-1}(E)$ then $f_*(\alpha )$ is a smooth measure on $Y$. Hence by condition 4) of Definition \ref{DefinitionGoodApproximation}, $\mathcal{K}_n(f_*(\alpha ))$ uniformly converges to the smooth measure $f_*(\alpha )$. Then it follows from the definition of $S$ that
\begin{eqnarray*}
<S,\alpha >=\int _{Y}\psi \wedge f_*(\alpha ).
\end{eqnarray*}
Hence $S$ is uniquely defined on $X-E$, and thus it is uniquely defined on the whole $X$, as wanted.
\end{proof}

Finally, we consider the intersection of currents.

\begin{proof}(Of Theorem \ref{TheoremSymmertyOfIntersectionCurrents})

Proof of a): Let $\mathcal{K}_n$ be a good approximation scheme by $C^2$ forms. Then $\mathcal{K}_n(\theta )$ uniformly converges to $\theta$, and hence $\mathcal{K}_n(\theta )\wedge T_2$ converges to the usual intersection $\theta \wedge T_2$.

Let $\alpha$ be a smooth form. Then by conditions 9), 6) and 4) of Definition \ref{DefinitionGoodApproximation}, we have
\begin{eqnarray*} 
\lim _{n\rightarrow\infty}\int _Y\mathcal{K}_n(T_2)\wedge \theta \wedge \alpha&=&\lim _{n\rightarrow\infty}\int _Y\mathcal{K}_n(T_2\wedge \theta ) \wedge \alpha\\
&=&\lim _{n\rightarrow\infty}\int _YT_2\wedge \theta \wedge \mathcal{K}_n(\alpha )\\
&=&\int _YT_2\wedge \theta \wedge \alpha . 
\end{eqnarray*}

The proofs of b) and c) are similar.
\end{proof}

\begin{proof} (Of Lemma \ref{LemmaIntersectionOfVarieties})
Let $\theta$ be a smooth $(p,p)$ form having the same cohomology class as that of $[V_1]$. Then by Proposition 2.1 in \cite{dinh-sibony5}, there are positive $(p-1,p-1)$ currents $R^{\pm}$ so that $[V_1]-\theta =dd^c(R^+-R^-)$. Moreover, $R^{\pm}$ are $DSH$ and we can choose so that $R^{\pm}$ are continuous outside $V_1$. To prove Lemma \ref{LemmaIntersectionOfVarieties}, it suffices to show that there is a current $S$ so that for any good approximation scheme by $C^2$ forms $\mathcal{K}_n$ then 
\begin{eqnarray*}
\lim _{n\rightarrow\infty}\mathcal{K}_n(R^+-R^-)\wedge [V_2]=S.
\end{eqnarray*}
The sequence $\mathcal{K}_n^{\pm}(R^{\pm})\wedge [V_2]$ converges on $Y-V_1\cap V_2$. In fact, outside of $V_2$ then $\mathcal{K}_n^{\pm}(R^{\pm})\wedge [V_2]=0$, and outside of $V_1$ then $\mathcal{K}_n^{\pm}(R^{\pm})$ converges locally uniformly (by condition 4) of Definition \ref{DefinitionGoodApproximation}) to a continuous form and hence $\mathcal{K}_n^{\pm}(R^{\pm})\wedge [V_2]$ converges. Then by an argument as in the proof of Theorem 6 in \cite{truong} using the Federer-type support theorem in Bassanelli \cite{bassanelli}, we are done.   
\end{proof}


\end{document}